\newtheorem{thm}{Theorem}[section]
\newtheorem{prop}[thm]{Proposition}
\newtheorem{preremark}[thm]{Remark}
  \newenvironment{rmk}%
    {\begin{preremark}\upshape}{\end{preremark}}
\newtheorem{definition}[thm]{Definition}
  \newenvironment{defn}%
    {\begin{definition}\upshape}{\end{definition}}
\numberwithin{equation}{section}
\renewenvironment{proof}{\vspace{1ex}\noindent{\bf Proof. }\hspace{0.5em}}
	{\hfill\qed\vspace{1ex}}
\renewcommand*\env@matrix[1][*\c@MaxMatrixCols c]{%
  \hskip -\arraycolsep
  \let\@ifnextchar\new@ifnextchar
  \array{#1}}
\titleformat*{\section}{\bfseries}
\titleformat*{\subsection}{\bfseries}
\begin{document}


\title{\bf Canonical Forms  for Families of  Anti-commuting Diagonalizable  Linear Operators}

\date{}
\author{
{\bf Yal\c{c}{\i}n Kumbasar\textsuperscript{a}, Ay\c{s}e H\"{u}meyra Bilge\textsuperscript{b,*}}\\
\\
 {\it \footnotesize{\textsuperscript{a} Department of Industrial Engineering, Bogazici University, Bebek, Istanbul, Turkey}}\\
 {\it \footnotesize{\textsuperscript{b} Faculty of Sciences and Letters, Kadir Has University, Cibali, Istanbul, Turkey}}}
  \maketitle

\hrule

\vskip 0.6cm
\noindent{\bf Abstract}\\
\vskip 0.2cm
\baselineskip 12pt
\small{It is well known that a commuting family of diagonalizable
linear operators on a finite dimensional vector space is
simultaneously diagonalizable. In this paper, we consider a family
${\mathcal A}=\{A_a\}$, $A_a:V\rightarrow V$, $a=1,\dots,N$ of  anti-commuting  (complex) linear operators
on a finite dimensional vector space.  We prove that if the family is diagonalizable over the complex numbers, then  $V$ has an
${\mathcal A}$-invariant direct sum decomposition into subspaces
$V_\alpha$
such that the restriction of the family ${\cal A}$ to $V_\alpha$ is a  representation of a Clifford algebra.
Thus unlike the families of commuting diagonalizable operators, diagonalizable anti-commuting families cannot be simultaneously digonalized, but on each subspace, they can be put simultaneously to (non-unique) canonical forms.  The construction of canonical forms
 for complex representations is straightforward, while for the real representations it follows from the results of
[Bilge A.H., \c{S}.
Ko\c{c}ak, S. U\u{g}uz, Canonical Bases for real representations
of Clifford algebras, Linear Algebra and its Applications 419
(2006) 417-439. 3].}

\vskip 0.6cm
\noindent\footnotesize {\it Keywords:} Anti-commuting linear operators, Representations of Clifford Algebras.
\vskip 0.6cm
\hrule
\vskip 18pt

\let\thefootnote\relax\footnotetext{ * Corresponding author. Tel.: +90212 533 65 32-1349 ; fax: +90212 533 63 30   \\
{\it E-mail addresses:} yalcin.kumbasar@boun.edu.tr, ayse.bilge@khas.edu.tr }
\thispagestyle{empty}
\nopagebreak[2] 

\pagenumbering{arabic}

\normalsize

\vskip  1cm
\baselineskip 16pt

\section{Introduction}

\hskip 0.6cm Simultaneous diagonalization of a family of commuting linear operators on a finite
dimensional vector space
is a well known result in linear algebra \cite{Hof}.
This result is applicable to an arbitrary (possibly infinite) family and asserts the existence of a basis
with respect to which all
operators of the family are diagonal.  In this paper,
 we  consider
an anti-commuting  family  ${\cal A}$ of  operators on a finite dimensional vector space $V$
and we show that if the family is diagonalizable over the complex numbers, then the operators in the family can
be put simultaneously into canonical forms
over both  the complex and real numbers.

Real or complex representations of Clifford algebras are typical examples of
anti-commuting  families that are diagonalizable over the complex numbers.  Our  main result is the proof
 that the finite dimensional vector space $V$ has an ${\cal A}$-invariant
direct sum decomposition into subspaces $V_\alpha$, such that, except for the common kernel of the family,
 the restriction of the family to
$V_\alpha$  is either a single nonzero diagonal operator or
a representation of some Clifford algebra of dimension larger than $1$.
This result, presented in Section 3, is derived directly from the fact that if $A_a$ belongs to the family
${\cal A}$, then $A_a$ and $A_a^2$ have the same kernel and the $A_a^2$'s form a commuting diagonalizable family ${\cal B}$, hence they are simultaneously diagonalizable.
One can then diagonalize the family ${\cal B}$ simultaneously, rearrange the basis
and obtain subspaces on which there are finite sub-collections of anti-commuting operators
whose squares are constants, that is, representations of
 Clifford algebras. Note that, since a $1$-dimensional subspace on which there is a single  nonzero operator is a representation of a $1$-dimensional Clifford algebra we could simply state  that $V$ has a decomposition to ${\cal A}$ invariant subspaces on which  the restriction of  the family consist of representations of Clifford algebras.

 In Section 2, we review basic results for commuting families of diagonalizable operators   and we discuss the direct construction of canonical forms, as a motivation for the general  case.  Then, in Section 3, we give the main theorem for the ${\cal A}$ invariant  decomposition of
$V$ for an arbitrary anti-commuting family leading to canonical forms over the complex numbers.  In Section 4, we discuss square diagonalizable operators and  describe the construction of real canonical forms for operators that are  diagonalizable over the complex numbers.

\section{Preliminaries}

\hskip 0.6cm In Section 2.1,   we introduce our notation, present basic properties of commuting families
of diagonalizable operators, and give basic definitions related to Clifford algebras. Then in Section 2.2 we discuss the direct construction of canonical forms.

\subsection{Notation and Basic Definitions}

\hskip 0.6cm In the following $V$ is a finite dimensional real or complex vector space.
Linear operators on $V$ will be denoted by upper case Latin letters $A$, $B$ etc, and the components of their
matrices with respect to some basis
will be denoted by $A_{ij}$, $B_{ij}$. Labels of operators will be denoted by single indices, for example $A_a$, $a=1,\dots,n$ denotes elements of a family of operators.
A family ${\cal A}$ of operators is called an ``anti-commuting family'' if for every distinct pair of operators  $A$ and $B$ in the
family, $AB+BA=0$.
The symbol $\delta_{ij}$ denotes the Kronecker delta, that is $\delta_{ij}=1$, if $i=j$ and zero otherwise.  When we  use
partitioning of matrices, scalars will  denote sub-matrices of appropriate size.

\begin{rmk}
\label{rmk:diag}
If $A$ is diagonalizable operator on a vector space $V$ and $V$ has an $A$-invariant direct
sum decomposition, then the restriction of $A$ to each invariant subspace is also diagonalizable
(Lemma 1.3.10 in \cite{Horn}).  Furthermore if we have a family of commuting (anti-commuting) operators ${\cal A}$ on $V$ and $V$
has an ${\cal A}$-invariant direct sum decomposition, then the restriction of the family to each summand is again a
commuting (anti-commuting) family of diagonalizable operators.
\end{rmk}

Now, we give Theorem~\ref{thm:ComSimDiag} whose proof is adopted from \cite{Horn}.

\begin{thm}
\label{thm:ComSimDiag} Let $\mathcal D$ be a family of
diagonalizable operators on an $n$ dimensional vector space $V$ and $A$, $B$
be in $\mathcal D$.
Then $A$ and $B$ commute if and only if they are simultaneously
diagonalizable.
\end{thm}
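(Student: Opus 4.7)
\medskip
\noindent\textbf{Proof proposal.}

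The plan is to prove the easy direction first and then reduce the nontrivial direction to a fact about invariant subspaces, namely Remark~\ref{rmk:diag}.

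For the ``if'' direction, I would observe that if $A$ and $B$ are simultaneously diagonalizable, then there exists a basis of $V$ in which both matrices are diagonal. Since any two diagonal matrices commute, $AB=BA$ holds in that basis, and hence in every basis. This part is essentially a one-line argument and I would dispose of it immediately.

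For the ``only if'' direction, which is the substantive content, I would decompose $V$ as the direct sum of the eigenspaces of $A$, say $V = \bigoplus_{\lambda} V_\lambda$, which is possible because $A$ is diagonalizable. The key step is to show each $V_\lambda$ is $B$-invariant: if $v \in V_\lambda$, then $A(Bv) = B(Av) = \lambda Bv$, so $Bv \in V_\lambda$. This is where the commutation hypothesis is used. At this point I invoke Remark~\ref{rmk:diag}: since $B$ is diagonalizable on $V$ and $V_\lambda$ is $B$-invariant, the restriction $B|_{V_\lambda}$ is diagonalizable. Choose a basis of $V_\lambda$ consisting of eigenvectors of $B|_{V_\lambda}$; every such vector is also an eigenvector of $A$ (with eigenvalue $\lambda$) because we are inside $V_\lambda$.

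Concatenating these bases over all eigenvalues $\lambda$ of $A$ gives a basis of $V$ consisting of common eigenvectors of $A$ and $B$, which is what simultaneous diagonalizability means. The main obstacle in this argument is the diagonalizability of the restriction $B|_{V_\lambda}$, which is exactly the content of Remark~\ref{rmk:diag} and therefore available for use here; without that observation one would have to repeat its minimal-polynomial proof. No case analysis on whether $A$ has repeated eigenvalues is needed, since the argument handles eigenspaces of any dimension uniformly.
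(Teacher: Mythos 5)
Your proposal is correct and follows essentially the same route as the paper: decompose $V$ into the eigenspaces of $A$, observe these are $B$-invariant (the paper phrases this as $(\lambda_i-\lambda_j)B_{ij}=0$ in a basis diagonalizing $A$, you phrase it coordinate-free via $A(Bv)=\lambda Bv$), then use the diagonalizability of $B$ restricted to each eigenspace (Remark~\ref{rmk:diag}) to build a common eigenbasis. The converse direction is handled identically in both.
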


\begin{proof}
Assume that $AB=BA$ holds. By a choice of basis we may assume that
$A$ is diagonal, that is
$A_{ij }=\lambda_{i}\delta_{ij}$, $i=1,...,n$.
From the equation $AB=BA$ we have
$$(\lambda_i-\lambda_j)B_{ij}=0,$$  that is $B_{ij}$ is zero unless $\lambda_i=\lambda_j$.
Rearranging the basis, we have a decomposition of $V$ into eigenspaces of $A$. This decomposition is
$B$ invariant, on each subspace $A$ is constant, $B$ is diagonalizable, hence they are simultaneously diagonalizable.

Conversely, assume that $A$ and $B$ are simultaneously
diagonalizable. Then, there is a basis with respect to which their matrices are diagonal.
Since diagonal matrices commute, it follows that the operators $A$ and $B$
commute.
\end{proof}

\begin{rmk}
\label{rmk:finite}
A commuting family of linear operators can be infinite, since we can  always add linear
combinations of the elements of the family.  However, an anti-commuting family is necessarily finite, unless
it contains operators  $A_a$ with $A_a^2=0$. To see this, let
 ${\mathcal A}=\{A_1,\dots ,A_N\}$ be a family
of anti-commuting diagonalizable linear operators on $V$.  That
is
$A_aA_b+A_bA_a=0$,  $a\ne b=1,\dots ,N$.
 The family ${\mathcal A}$  is necessarily linearly independent and it contains $N< n^2$
elements. Because if $B$ is a linear combination of the $A_a$
$a=1,\dots,k$, i.e, $B=\sum_a^k c_a A_a$ and $B$ anti-commutes with
each of the $A_b$'s in this summation it is necessarily zero.
Furthermore, since the anti-commuting family cannot include the
identity matrix, it follows that $N<n^2$.
\end{rmk}

\begin{prop}
\label{prop:ker}
Let $A$ be a linear operator on a finite dimensional vector space $V$. Then $Ker(A)\subseteq Ker(A^2)$.  If $A$ is diagonalizable over $C$, $Ker(A^2)=Ker(A)$. If $A^2$ is diagonalizable over $C$, then  and the restriction of $A$ to the complement of its kernel is diagonalizable over $C$.
\end{prop}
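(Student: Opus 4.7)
The plan is to prove the three assertions in succession, each following from an observation about the eigenvalues of $A$ and $A^2$, but applied under progressively stronger hypotheses. The inclusion $\text{Ker}(A)\subseteq\text{Ker}(A^2)$ requires nothing at all: $Av=0$ gives $A^2v=A(Av)=0$.

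For the reverse inclusion under the assumption that $A$ is diagonalizable over $\mathbb{C}$, I would pick an eigenbasis $\{v_i\}$ with $Av_i=\lambda_i v_i$ and write a general vector as $v=\sum c_i v_i$. Then $A^2v=\sum c_i\lambda_i^2 v_i=0$ forces $c_i=0$ whenever $\lambda_i\ne 0$, so the surviving terms automatically lie in $\text{Ker}(A)$, yielding $\text{Ker}(A^2)\subseteq\text{Ker}(A)$.

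The third assertion is the substantive one. The idea is to use the hypothesis that $A^2$ is diagonalizable to split $V=\bigoplus_j W_{\mu_j}$ into the eigenspaces of $A^2$; each $W_{\mu_j}$ is $A$-invariant because $A$ commutes with $A^2$. On any summand with $\mu_j\ne 0$, the restriction of $A$ annihilates the polynomial $x^2-\mu_j$, whose two distinct roots $\pm\sqrt{\mu_j}$ ensure that the minimal polynomial of $A|_{W_{\mu_j}}$ has only simple roots, so $A|_{W_{\mu_j}}$ is diagonalizable. Putting the pieces together, the subspace $U=\bigoplus_{\mu_j\ne 0}W_{\mu_j}$ is $A$-invariant, complementary to $W_0=\text{Ker}(A^2)$, and $A|_U$ is diagonalizable; moreover $A|_U$ is injective, so $\text{Ker}(A)\cap U=0$.

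The main subtlety—and the point I expect to be the trickiest to state cleanly rather than to prove—is the meaning of ``complement of its kernel'', because $\text{Ker}(A)$ can be strictly smaller than $\text{Ker}(A^2)$ under the mere hypothesis that $A^2$ is diagonalizable (a single $2\times 2$ nilpotent Jordan block has $A^2=0$ diagonalizable while $A$ is not). My reading is that the canonical $A$-invariant complement $U$ produced above, on which $A$ is automatically invertible and diagonalizable, is what the authors use in later sections, rather than an arbitrary vector-space complement of $\text{Ker}(A)$.
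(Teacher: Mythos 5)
Your handling of the first two assertions coincides with the paper's (the inclusion is immediate; for the equality the paper likewise passes to an eigenbasis of $A$ and notes the eigenvalues of $A^2$ are the squares). For the third assertion your route is genuinely different. The paper argues via the Jordan canonical form: if $A$ were not diagonalizable, some Jordan block would be non-diagonal, and the requirement that its square remain diagonalizable forces the eigenvalue of any such block to be zero, so the failure of diagonalizability is confined to the generalized kernel. You instead decompose $V=\bigoplus_j W_{\mu_j}$ into eigenspaces of $A^2$, observe each $W_{\mu_j}$ is $A$-invariant, and note that on a summand with $\mu_j\neq 0$ the operator $A$ is annihilated by $x^2-\mu_j$, whose simple roots force the minimal polynomial of $A|_{W_{\mu_j}}$ to be squarefree, hence diagonalizability. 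This is cleaner in two respects: it avoids invoking Jordan form (and the slightly delicate inspection of which nilpotent blocks square to something diagonalizable), and it produces the $A$-invariant subspace $U=\bigoplus_{\mu_j\ne 0}W_{\mu_j}$ explicitly rather than as a byproduct.

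You are also right to flag the ambiguity in ``complement of its kernel.'' When $A^2$ is merely diagonalizable one can have $Ker(A)\subsetneq Ker(A^2)$, in which case $Ker(A)$ need not admit any $A$-invariant complement, and the subspace $U$ you construct is a complement of $Ker(A^2)$, not of $Ker(A)$. The paper's phrasing glosses over this, though Section 4 tacitly acknowledges the distinction when it treats the case $Ker(A)\neq Ker(A^2)$ separately. Your reading---that the intended complement is the $A$-invariant $U$ on which $A$ is nonsingular---is the one consistent with how the result is used.
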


\begin{proof}
The first statement is obvious.  For the second one, we choose a basis with respect to which $A$ is diagonal. Then eigenvalues of $A^2$ are squares
of eigenvalues of $A$. Hence,  $Ker(A^2)=Ker(A)$. To prove the third statement, note that if $A$ is not diagonalizable over $C$,  then in its Jordan form over $C$, there is at least one non-diagonal Jordan block whose square is diagonal. But this is
possible only when the corresponding eigenvalue is zero.
\end{proof}

\begin {rmk}
\label{rmk:sqrcomm}
If $A$ is diagonalizable, then $A^2$ is also diagonalizable. Also if the pair $(A,B)$ anti-commutes then the pairs
$(A,B^2)$ and $(A^2,B^2)$ commute, since
$$AB^2=-B(AB)=B^2A,\quad A^2B^2=A(AB^2)=A(B^2A)=(B^2A)A=B^2A^2.$$
Thus given a family $\{A_1,\dots,A_N\}$
of anti-commuting diagonalizable operators the families $\{A_1,A^2_2\dots,A^2_N\}$ and $\{A^2_1,\dots,A^2_N\}$
are commuting diagonalizable families, hence they are both simultaneously diagonalizable.
\end{rmk}

We give now the definitions related to Clifford algebras and describe briefly the construction of canonical forms for complex representations.

\begin{defn}
Let $V$ be a vector space over the field $k$ and $q$ be a quadratic form on $V$. Then the associative algebra with unit, generated by the vector space $V$ and the identity $1$ subject to the relations
$v\cdot v=-q(v)1$,
for any $v\in V$ is called a Clifford algebra and denoted by $Cl(V,q)$.
\end{defn}
\begin{defn}
Let $W$ be a real or complex vector space and $C(V,q)$ be  a Clifford algebra.   A representation of $Cl(V,q)$ on $W$ is an algebra homomorphism
$\rho :\quad Cl(V,q)\to End(W).$
\end{defn}

The construction of complex representations of an $N$ dimensional  Clifford algebras on an $n$ dimensional vector space is a  straightforward process. We first construct canonical forms for a pair of anti-commuting operators, then use the requirement that the remaining $N-2$ operators  anti-commute with these two to  show that their  canonical forms are given by the representation of an $N-2$ dimensional Clifford algebra on an $n/2$ dimensional vector space.
If the Clifford algebra contains at least one element with positive square, the construction of real representations is similar to the complex case.  But if all Clifford algebra elements have negative squares,   the construction of simultaneous canonical forms is nontrivial   \cite{Bilge}.

In the next subsection we shall discuss the case of two anti-commuting operators and  outline a proof of Theorem 3.1.

\subsection{A Pair of Anti-commuting Operators: Outline of a Direct Proof}

\hskip 0.6cm The Theorem 3.1. given in Section 3 states that the anti-commuting family is essentially a direct sum of representations of Clifford algebras. The proof of the theorem is almost trivial but it is non-constructive; the difficulty of the construction is in a sense transferred to the construction of the simultaneous canonical forms of Clifford algebras.  In this section we describe in detail the construction of canonical forms for two anti-commuting operators and outline an alternative proof of Theorem 3.1.

Let ${\cal A}=\{A_1,\dots, A_N\}$ be  a finite family of anti-commuting diagonalizable operators.
 One can
always choose a basis with respect to  which any member of the
family is diagonal, hence, without loss of generality we may
assume that $A=A_1$ is diagonal and write
$A_{ij}=\lambda_i\delta_{ij}$.  If $B=B_{ij} $ is any other member of the family,
substituting these in
equation $AB+BA=0$, we obtain
$$
\label{eqn:3.1}
(\lambda_i+\lambda_j)B_{ij}=0
$$
Thus $B_{ij}$ is zero unless  $\lambda_i+\lambda_j=0$.  This happens either when  $\lambda_i$ and $\lambda_j$ are both zero, or when they are a pair of eigenvalues with equal magnitude and opposite sign.
 This suggests that we should group the eigenvalues of $A$
in three sets
$$\{0\},\quad \{\mu_1,\dots,\mu_l\},\quad
\{\lambda_1,-\lambda_1,\dots,\lambda_k,-\lambda_k\},$$ where
$\mu_i+\mu_j\ne 0$ for $i,j=1,\dots,l$.  Let the direct sum of the eigenvectors for each group be $Ker(A)$, $U_A$ and $W_A$. That is
$$V=Ker(A)\oplus U_A\oplus W_A.$$
From the last equation it can easily be seen that these subspaces are ${\cal A}$ invariant and for any other member of the family $B$,
  $B|_{Ker(A)}$ is free and $B|_{U_A}=0$.
On $Ker(A)$ we have a family of  $N-1$ operators and on $U_A$, only $A$ is nonzero. Thus we have nontrivially an $N$ element anti-commuting family only on the subspace $W_A$.

Let $W_{A,i}^\pm$ be the eigenspaces corresponding to the eigenvalues $\pm \lambda_i$ and let
$W_{A,i}=W^+_{A,i}\oplus W^-_{A,i}$.
Since if $AX=\lambda  X$, then $A(BX)=-BAX=-\lambda
(BX)$ and it follows that  and $B$ maps $W^+_{A,i}$ into $W^-_{A,i}$ and vice vera, that is,
$$B(W^+_{A,i})\subset W^-_{A,i},\quad B(W^-_{A,i})\subset
W^+_{A,i},$$ hence $W_{A,i}$'s are $B$ invariant.

If the dimensions of the subspaces $W^\pm_{A,i}$
are not equal, then
the restriction of $B$ to $W_{A,i}=W^+_{A,i}\oplus W^{-}_{A,i}$ is necessarily singular.
Because if $B$ were nonsingular on either $W^\pm_{A,i}$, it would map a linearly independent  set to
a linearly independent set, but this is impossible if the dimensions are different. However,
the restriction of $B$ can be singular even if the dimensions are equal.  On the other hand
if $B$ is nonsingular, then necessarily
$dim(W^+_{A,i})= dim(W^-_{A,i})$
since bases of $W^+_{A,i}$ are mapped to bases of $W^-_{A,i}$ and vice versa.
Thus we can refine the direct sum decomposition of  $W_{A,i}$ and we arrive to the  direct sum decomposition
$$W_{A,i}=(Ker(B)\cap W^+_{A,i})\oplus (Ker(B)\cap W^-_{A,i})
\oplus \tilde{W}^+_{A,i} \oplus \tilde{W}^-_{A,i},$$
where the $\tilde{W}^\pm_{A,i}$ are subspaces of equal dimension on which $B$ is nonsingular.  It follows that the restrictions
of $A$ and $B$ to $W_{A,i}$ have the following block diagonal form
$$A_{|W_{A,i}}=\left(%
\begin{array}{cccc}
  \lambda_i & 0 & 0 & 0 \\
  0 & -\lambda_i & 0 & 0 \\
  0 & 0 & \lambda_i & 0 \\
  0 & 0 & 0 & -\lambda_i \\
\end{array}%
\right),\>
B_{|W_{A,i}}=\left(%
\begin{array}{cccc}
  0 & 0 & 0 & 0 \\
  0 & 0 & 0 & 0 \\
  0 & 0 & 0 & B_1 \\
  0 & 0 & B_2 & 0 \\
\end{array}%
\right),$$
where the first two diagonal blocks may have different dimensions but the last
two diagonal blocks in the restriction of $A$ and the submatrices $B_1$ and $B_2$ in the
restriction of $B$ are square matrices of the same dimension.
Incorporating this decomposition to the previous one,  we have a direct sum decomposition of
$V$ adopted to the pair of anti-commuting diagonalizable operators $A$ and $B$ as
follows
$$V=(Ker(A)\cap Ker(B))\oplus U_A\oplus U_B \oplus W^+_{1}\oplus W^-_{1}\oplus \dots \oplus W^+_{k}\oplus W^-_{k},$$
where $B|_{U_A}=0$, $A|_{U_B}=0$, and both $A$ and $B$ are nonsingular on the $W^\pm_{i}$, and
$dim(W^+_{i})=dim(W^-_{i})$ for $i=1,\dots,k$.
Thus we have now subspaces $W_i=W_i^+\oplus W_i^-$ on which $A^2$ is a nonzero constant and $B$ is nonsingular.

To determine the forms of $B_1$ and $B_2$, we start by the following observation. By Remark \ref{rmk:sqrcomm}
$(A,B^2)$ and $(A^2,B^2)$ are simultaneously diagonalizable and by Proposition \ref{prop:ker} $Ker(B)=Ker(B^2)$.
Recall that  $A$ and $B$ are both nonsingular on the subspace
$W_i$ and  diagonalize $A$ and $B^2$
simultaneously. We choose a basis $\{X_1,\dots,X_m\}$ for $W_i^+$, the $+\lambda$ eigenspace of $A$. Thus
$AX_i=\lambda X_i$, $B^2X_i=\eta_i X_i$, for $i=1\dots,m$
and we define
$Y_i=BX_i$.
Then
$AY_i=A(BX_i)=-B(AX_i)=-\lambda (BX_i)=-\lambda Y_i$,
hence $Y_i$ belongs to the $-\lambda$ eigenspace of $A$.  Furthermore
$BY_i=B^2X_i=\eta_iX_i$.
It follows that with respect to the basis  $\{X_1,\dots,X_m,Y_1,\dots,Y_m\}$,
the matrices of $A$, $B$ and $B^2$ are  as below.

$$
A|_{W_{i}}=\left(%
\begin{array}{cc}
  \lambda	I & 0          \\
  0        & -\lambda I  \\
   \end{array}%
\right),
B|_{W_{i}}=\left(%
\begin{array}{cc}
  0	 & D          \\
  I  & 0          \\
   \end{array}%
\right),
B^2|_{W_{i}}=\left(%
\begin{array}{cc}
  D	 & 0          \\
  0  & D   \\
   \end{array}%
\right),
$$
where all submatrices are square, $I$ is the identity matrix
and $D$ is a diagonal matrix.
If $B^2$ has $q$ distinct eigenvalues $d_1,\dots,d_q$ with eigenspaces of dimensions
$m_i$, we can rearrange the basis so that
$$A|_{W_{i}}=\left(%
\begin{array}{ccccc}
  \lambda	 & 0      & \cdots 	& 0 	     &	0       \\
   0     	 &-\lambda& \cdots  & 0        &  0       \\
   \vdots  & \vdots & \ddots  & \vdots   & \vdots   \\
   0       & 0      & \cdots  & \lambda  &  0       \\
   0       & 0      & \cdots  & 0        & -\lambda \\
    \end{array}%
\right),\>
B|_{W_{i}}=\left(%
\begin{array}{ccccc}
   0	     & d_1      & \cdots 	& 0 	     &	0       \\
   I     	 & 0        & \cdots  & 0        &  0       \\
   \vdots  & \vdots & \ddots  & \vdots     & \vdots   \\
   0       & 0      & \cdots  & 0          &  d_q     \\
   0       & 0      & \cdots  & I          & 0        \\
    \end{array}%
\right),\>
$$
$$
\> B^2|_{W_{i}}=\left(%
\begin{array}{ccccc}
   d_1	   & 0      & \cdots 	& 0 	     &	0       \\
   0     	 & d_1    & \cdots  & 0        &  0       \\
   \vdots  & \vdots & \ddots  & \vdots   & \vdots   \\
   0       & 0      & \cdots  & d_q      & 0        \\
   0       & 0      & \cdots  & 0        & d_q      \\
    \end{array}%
\right),
$$
where $\lambda$ and $d_i$'s denote constant matrices of appropriate size.
 Hence each $W_i$ has a direct sum decomposition
 $$W_i=W_{i,1}\oplus\dots\oplus W_{i,q}$$
 such that both $A^2$ and $B^2$ restricted to each summand are constant matrices.
Thus on the $W_{i,j}$ we have a representation of a two dimensional Clifford algebra.  This is possible in particular
if the dimensions of $W_{i,j}$'s are even.

If $N\ge 2$ and $C_a$ is any  other element of the family then it will be free on the common kernel of $A$ and $B$, and it will be zero on
$U_A$ and $U_B$. From the anti-commutativity of  $C_a$ with $A$ and $B$,
on each $W_{i,j}$, it will be of the form
$$C_a|_{W_{i,j}}=\left(
      \begin{array}{cc}
        0 & -d_jE_a \\
        E_a & 0\\
      \end{array}
    \right)
    $$
where $E_a$ is a square matrix.    By change of basis on $W_{i,j}^+$ we can diagonalize the restriction of  $C_a^2$ to $W_{i,j}^+$, rearrange the basis to obtain subspaces on which $A$, $B$ and $C^2_a$'s are constants.  Here, by construction,  the nonzero
the eigenvalues of $C_a$ occur in pairs with equal absolute value and opposite sign and the dimensions of the zero eigenspaces in $W_{i,j}^+$ and $W_{i,j}^-$ are the same.

It is possible to continue with this construction  by adding operators one by one, but this procedure gets more and more complicated. Instead, we note that the $C_a^2$'s restricted to $W_{i,j}^+$ form a commuting diagonalizable family, and instead of adding new operators one by one, we could diagonalize them simultaneously, and find subspaces on which $C_a^2$'s are constants. This remark suggests that one could do the same trick at the beginning and diagonalize simultaneously the squared family. This leads to the proof given below.

\section{Anti-commuting Families of Diagonalizable Linear Operators: Proof}

\hskip 0.6cm The construction of simultaneous canonical forms for a family   ${\cal A}=\{A_1,\dots,A_N\}$,
  of anti-commuting of diagonalizable operators on a finite dimensional (real or complex)   vector space $V$,
   is
based on the fact that the family of squared operators ${\cal B}=\{A_1^2,\dots,A_N^2\}$  is a commuting diagonalizable family, hence it is simultaneously diagonalizable.
One can then find a
basis with respect to which the  family  of squared operators  is diagonal and rearrange
 this basis in such a way that
the vector space $V$  is a direct sum of subspaces on which the operators of the family  ${\cal B}$ are either zero or constant.  This is nothing but a representation of a Clifford algebra which exists in  specific dimensions.
We state this result below.

\begin{thm}
Let ${\cal A}$ be a family of $N$ diagonalizable anti-commuting operators on an $n$ dimensional vector space $V$. Then $V$ has a direct sum decomposition
$$V=U_0\oplus U_1\oplus \dots U_j\oplus W_2^1 \oplus\dots \oplus W_2^{k_2}\oplus\dots \oplus W_N^1\oplus \dots \oplus W_N^{k_N},$$
where $U_0$ is the common kernel of the family,  $U_i$'s are subspaces of arbitrary dimensions on which only $A_i$ is nonzero and it is nonsingular with $j\le N$,
and $W_i^j$'s are subspaces on which the restriction of the family ${\cal A}$ is a representation of an $i$ dimensional Clifford algebra with $k_i\le {N\choose i}$
\end{thm}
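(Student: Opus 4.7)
The plan is to reduce everything to Remark \ref{rmk:sqrcomm}. By that remark, the squared family $\mathcal{B} = \{A_1^2, \ldots, A_N^2\}$ is a commuting, diagonalizable family, hence by Theorem \ref{thm:ComSimDiag} it can be simultaneously diagonalized. This yields a direct sum decomposition $V = \bigoplus_\alpha V_\alpha$ in which each summand $V_\alpha$ is a common eigenspace of all the $A_a^2$, on which $A_a^2$ acts as a scalar $\mu_a^\alpha$.

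Next I would check that each $V_\alpha$ is automatically $\mathcal{A}$-invariant. For $v \in V_\alpha$ and any index $b$, the anti-commutativity gives $A_a^2(A_b v) = -A_a A_b (A_a v) = A_b A_a^2 v = \mu_a^\alpha (A_b v)$, so $A_b v \in V_\alpha$ for every $a$. With invariance established, for each $\alpha$ I would define the support $S_\alpha = \{a : \mu_a^\alpha \neq 0\}$. By Proposition \ref{prop:ker} applied to the complex-diagonalizable case, $Ker(A_a^2) = Ker(A_a)$, so for $a \notin S_\alpha$ the restriction $A_a|_{V_\alpha}$ vanishes, while for $a \in S_\alpha$ the restriction is nonsingular because its square is the nonzero scalar $\mu_a^\alpha$.

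It then remains to reorganize the $V_\alpha$ to match the form in the theorem. The subspace $U_0$ is obtained by summing all $V_\alpha$ with $S_\alpha = \emptyset$. For each index $a$, the $V_\alpha$ with $S_\alpha = \{a\}$ are collected into a single summand $U_a$ on which only $A_a$ acts nontrivially and nonsingularly; this produces the $U_i$ in the decomposition, with at most $N$ such summands. For each subset $S \subseteq \{1,\ldots,N\}$ of size $i \ge 2$, I would then collect the joint eigenspaces with $S_\alpha = S$ into a single $W_i^j$; on each such $V_\alpha$ the anti-commuting restrictions $\{A_a|_{V_\alpha}\}_{a \in S}$ satisfy $(A_a|_{V_\alpha})^2 = \mu_a^\alpha I$, which is exactly a representation of a Clifford algebra on $i$ generators. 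Grouping by support yields at most $\binom{N}{i}$ summands of each dimension $i$, giving the bound $k_i \le \binom{N}{i}$.

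I do not foresee a serious obstacle: the key algebraic content, namely commutativity of the squared family and the equality $Ker(A) = Ker(A^2)$ for diagonalizable $A$, is already in place. The only real work is the $\mathcal{A}$-invariance verification and the combinatorial relabeling; once those are checked, the structure of each summand as a Clifford algebra representation is forced by the anti-commutation relations together with the scalar squares on each joint eigenspace.
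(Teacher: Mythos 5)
Your proof follows essentially the same route as the paper: simultaneously diagonalize the commuting squared family $\mathcal{B}$, then group common eigenspaces according to which $A_a^2$ vanish and which are nonzero scalars. You actually supply details the paper's terse proof leaves implicit — the explicit $\mathcal{A}$-invariance check of each joint eigenspace and the combinatorial bookkeeping by support set — so your argument is a more careful write-up of the same idea rather than a different approach.
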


\begin{proof}
Let ${\cal B}=\{A_1^2,\dots,A_N^2\}$ be the family of squared operators.  Since ${\cal A}$ is an anti-commuting family, ${\cal B}$ is a commuting diagonalizable family hence it is simultaneously diagonalizable. Let $\{X_1,\dots ,X_n\}$
be a basis with respect to which ${\cal B}$ is diagonal.  We can group the eigenvectors in such a way that on the subspace spanned by each group the $A_i^2$'s are constant.  It follows that on each of these subspaces the $A_i$'s belong to a representation of a Clifford algebra.
\end{proof}

As an example consider a family of $N=5$ anticommuting diagonalizable operators $A_i$ on a $n=20$ dimensional vector space and let
$\{X_j\}_{j=1}^20$ be a basis with respect to which the squared family is diagonal. The ranges of the operators $A_i$ are given below.
\begin{eqnarray}
&&{\rm Range}(A_1)={\rm Span}\{X_5,X_6,X_8,X_{12},X_{16},X_{19}\},\cr
&&{\rm Range}(A_2)={\rm Span}\{X_3,X_7,X_9,X_{11},X_{13},X_{15},X_{17}\},\cr
&&{\rm Range}(A_3)={\rm Span}\{X_2,X_4,X_5,X_6,X_8,X_{10},X_{12},X_{14},X_{16},X_{19}\},\cr
&&{\rm Range}(A_4)={\rm Span}\{X_1,X_3,X_5,X_7,X_{10},X_{11},X_{12},X_{13},X_{14},X_{15},X_{16},X_{17},X_{19},X_{20}\},\cr
&&{\rm Range}(A_5)={\rm Span}\{X_2,X_4,X_5,X_6,X_{8},X_{10},X_{12},X_{14},X_{16},X_{19}\},
\nonumber
\end{eqnarray}
 We can see that the common kernel of the family is
 $U_0={\rm Span}\{X_{18}\}$.
 There are two  subspaces on which there is a single nonzero operator. These are
 $U_1={\rm Span}\{X_1,X_{20}\}$
 where $A_4$ is nonzero and
 $U_2={\rm Span}\{X_9\}$
 where $A_2$ is nonzero. Since all other operators are zero,
 $A_4(X_1)=\mu_1,\quad A_4(X_{20})=\mu_2$,
 where $\mu_1$ and $\mu_2$ are arbitrary.
 There are $2$  subspaces on which there are two nonzero operators. The first one is
 $W_2^1={\rm Span}\{X_2,X_{4}\}$
 where $A_3$ and $A_5$ are nonzero. On this subspace we have a representation of a two dimensional Clifford algebra.
 Thus
 $A_3(X_2)=\lambda_1X_2,\quad A_3(X_4)=-\lambda_1X_4$, and
 $A_5(X_2)=X_4,\quad A_5(X_4)=dX_2$.
 The other subspace on which there is a representation of a $2$ dimensional Clifford algebra is  $W_2^2={\rm Span}\{X_3,X_{7},X_{11},X_{13},X_{15},X_{17}\}$
 where $A_2$ and $A_4$ are nonzero. This is a six dimensional subspace, the eigenvalues of  $A_2$ and $A_4$ can be the same or different. Hence we may have a combination of reducible and irreducible representations.
There are two subspaces with three nonzero operators. These are
$W_3^1={\rm Span}\{X_6,X_{8}\}$
 on which $A_1$, $A_3$ and $A_5$ are nonzero and
 $W_3^1={\rm Span}\{X_{10},X_{14}\}$
 on which $A_1$, $A_3$ and $A_5$ are nonzero.
 On these subspaces we have representations of three dimensional Clifford algebras.
 Finally on
 $W_4^1={\rm Span}\{X_{5},X_{12},X_{16},X_{19}\}$
 the operators $A_1$, $A_3$, $A_4$ and $A_5$ are nonzero.  There is a representation of  a $4$  dimensional Clifford algebra which can exist only on a four dimensional subspace.

\section {Square Diagonalizable Anti-commuting Families of Linear Operators}

\hskip 0.6cm The construction in the previous section suggests that we may only require the diagonalizability of the squared family in order to obtain canonical forms for an anti-commuting family.  This will not be quite true, because there will be difficulties when the kernels of the operators in the original and the squared family are different. In this section we consider now an anti-commuting family ${\cal A}$ of operators whose squares are diagonalizable.

In the easiest case, the family ${\cal A}$ is a family of real operators that are diagonalizable over $C$ but not diagonalizable over $R$.
In this case $A^2$ is diagonalizable over $R$ and
 $Ker(A)=Ker(A^2)$. The  construction above works with the exception that the dimensions of the representation spaces are determined by the real representations of Clifford algebras  \cite{Bilge}. We have thus the analogue of Theorem 3.1, where the only difference is that  the representations of the Clifford algebras are real.

\begin{thm}
Let ${\cal A}$ be a family of $N$  real, anti-commuting  operators
on an $n$ dimensional vector space $V$.  If the operators $A_a\in {\cal A}$ are diagonaliazable over $C$, then $V$ has a direct sum decomposition
$$V=U_0\oplus U_1\oplus \dots U_j\oplus W_2^1 \oplus\dots \oplus W_2^{k_2}\oplus\dots \oplus W_N^1\oplus \dots \oplus W_N^{k_N},$$
where $U_0$ is the common kernel of the family,  $U_i$'s are subspaces of arbitrary dimensions on which only $A_i$ is nonzero and it is nonsingular with $j\le N$,
and $W_i^j$'s are subspaces on which the restriction of the family ${\cal A}$ is a real representation of an $i$ dimensional Clifford algebra with $k_i\le {N\choose i}$.
\end{thm}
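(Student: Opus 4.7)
The plan is to mirror the proof of Theorem 3.1, working throughout with real subspaces and a real simultaneous diagonalization of the squared family. Because ${\cal A}$ is anti-commuting, the squared family ${\cal B}=\{A_1^2,\dots,A_N^2\}$ is commuting by Remark~\ref{rmk:sqrcomm}, and by Proposition~\ref{prop:ker} each $A_a^2$ has the same kernel as $A_a$. The first step is to verify that ${\cal B}$ is not merely commuting and diagonalizable over $C$ but actually simultaneously diagonalizable over $R$. For this I would use the situation flagged at the opening of Section 4: since each $A_a$ is a real operator diagonalizable over $C$ whose eigenvalues are either real or pure imaginary, the eigenvalues of $A_a^2$ are all real, and a real semisimple operator with real spectrum is diagonalizable over $R$; Theorem~\ref{thm:ComSimDiag} then supplies a real basis $\{X_1,\dots,X_n\}$ of $V$ with respect to which every $A_a^2$ is diagonal.

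With such a real basis in hand, I would group the $X_j$ by the $N$-tuple $(\mu_1,\dots,\mu_N)$ of eigenvalues of $(A_1^2,\dots,A_N^2)$ at $X_j$. The resulting common eigenspaces give a real, ${\cal A}$-invariant direct sum decomposition of $V$ on each summand of which every $A_a^2$ acts as a real scalar $\mu_a$. The summand on which all $\mu_a$ vanish is the common kernel $U_0$. Summands on which exactly one $\mu_a$ is nonzero, collected as $U_1,\dots,U_j$ with $j\le N$, carry only a single nonsingular operator and may be of arbitrary dimension. A summand on which exactly $i\ge 2$ of the squares are nonzero carries $i$ real anti-commuting operators whose squares are nonzero real scalars, i.e., a real representation of an $i$-dimensional Clifford algebra; since the index set of nonzero operators ranges over the subsets of $\{1,\dots,N\}$ of size $i$, there are at most ${N\choose i}$ such blocks $W_i^j$.

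On each block $W_i^j$ the explicit construction of simultaneous canonical forms for a real representation of a Clifford algebra is available in \cite{Bilge}; invoking that construction puts the restriction of ${\cal A}$ to $W_i^j$ in its real canonical form and completes the proof. The main obstacle is the first step, namely establishing real-diagonalizability of the squared family; once that is in place, the geometric decomposition is a direct transcription of the argument for Theorem 3.1, and the remaining constraint that the dimension of each Clifford block be compatible with the existence of a real representation is handled automatically by the canonical form construction of \cite{Bilge}.
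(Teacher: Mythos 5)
Your plan follows the paper's own (very compressed) argument: diagonalize the squared family ${\cal B}=\{A_a^2\}$ over $R$, group by the common eigenvalue tuples, and invoke \cite{Bilge} on the resulting Clifford blocks. You also correctly identify the crux, namely that one must first establish that the $A_a^2$ are simultaneously diagonalizable \emph{over} $R$, not just over $C$. Where the proposal goes wrong is in the step you offer to close that gap.

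You assert that a real operator $A_a$ that is diagonalizable over $C$ has eigenvalues that are ``either real or pure imaginary,'' from which the real spectrum of $A_a^2$ and hence its real diagonalizability would follow. That assertion is not a consequence of the hypotheses. A real operator diagonalizable over $C$ merely has its spectrum closed under complex conjugation; the anti-commuting structure additionally forces closure under negation on blocks where another family member is nonsingular. Neither constraint forces eigenvalues onto the real or imaginary axes. Concretely, take $J=\left(\begin{smallmatrix}1&-1\\1&1\end{smallmatrix}\right)$ (eigenvalues $1\pm i$), set
$A=\left(\begin{smallmatrix}J&0\\0&-J\end{smallmatrix}\right)$, $B=\left(\begin{smallmatrix}0&I\\I&0\end{smallmatrix}\right)$
on $R^4$. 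Then $A$ and $B$ are real, anti-commute, and are both diagonalizable over $C$ (indeed $B^2=I$); but $A^2$ has eigenvalues $\pm 2i$, so $A^2$ is not diagonalizable over $R$ and is not a real scalar on any nonzero real invariant subspace. This breaks the proposed first step, and in fact shows that the decomposition claimed in the theorem cannot exist for this pair (both operators are everywhere nonsingular, so no $U_i$ is available, and no Clifford block $W_2^j$ can carry $A^2=cI$ with real $c$). In fairness, the paper itself simply asserts that ``$A^2$ is diagonalizable over $R$'' without proof and is subject to the same objection; the hypothesis really needed here is that each $A_a^2$ has real spectrum (equivalently, that the nonzero eigenvalues of each $A_a$ are real or pure imaginary), which is automatic for bona fide Clifford generators but not for an arbitrary anti-commuting family of $C$-diagonalizable real operators. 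If you add that hypothesis (or restrict attention to the case $A_a^2=\pm c_a I$), the rest of your argument — grouping by eigenvalue tuples, counting at most $\binom{N}{i}$ blocks of size $i$, and appealing to \cite{Bilge} for the canonical forms — goes through exactly as in Theorem 3.1.
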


If $Ker(A)\ne Ker(A^2)$, then in the  minimal polynomial $m_A(t)$ of $A$, the only nonlinear factor is $t^2$.
As in the previous section, we can diagonalize the squared family, arrange the eigenspaces so that $V$ is a direct sum of subspaces on which $A_a^2$'s are constant.  But as opposed to the previous case,  if $A_a^2$ is zero on some subspace, $A_a$ need not be zero, hence we may have a family of anti-commuting matrices whose squares are positive, negative or zero.  This is just the representation of some degenerate Clifford algebra \cite{DKL} for which the construction of canonical forms is not known.

\section*{Acknowledgements}
\hskip 0.6cm This work is based on the M.Sc. Thesis of the first author, presented at the Institute of Science and Technology at Istanbul Technical University, May 2010.

\vskip 18pt
\hskip -6mm {\bf References }
\vskip 18pt
\small
\begin{enumerate}
\bibitem[1]{Bilge} A.H. Bilge, \c{S}. Ko\c{c}ak, S. U\u{g}uz,
Canonical Bases for real representations of Clifford algebras,
Linear Algebra and its Applications 419 (2006) 417-439.
\bibitem[2] {DKL}  T.Dereli, \c{S}. Ko\c{c}ak, M. Limoncu, Degenerate Spin Groups as Semi-Direct Products, Advances in Applied Clifford Algebras, DOI    10.1007/s00006-003-0000.+ Ablomawithc, Lee,
\bibitem[3]{Hof} K. Hoffman, R. Kunze, Linear Algebra, Prentice-Hall, New Jersey, 1971.
\bibitem[4]{Horn} R.A. Horn, C.R. Johnson, Matrix Analysis, Cambridge Univ. Press, Cambridge, 1985.
\bibitem[5]{Lawson} H.B. Lawson, M.L. Michelsohn, Spin Geometry, Princeton Univ. Press.    Princeton, NJ., 1989.

\end{enumerate}
\end{document}